\documentclass[10pt]{amsart}

\usepackage[margin=1.5in]{geometry}                
\usepackage{graphicx}
\usepackage{amssymb}
\usepackage{epstopdf}
\usepackage{xcolor}
\usepackage[normalem]{ulem}
\usepackage{enumerate}
\usepackage{mathtools}
\usepackage{enumitem}
\usepackage{amsmath}
\usepackage{braket}
\usepackage{mathrsfs}  
\usepackage{hyperref}

\usepackage [english]{babel}
\usepackage [autostyle, english = american]{csquotes}
\MakeOuterQuote{"}

\newcommand{\R}{\mathbb{R}}
\newcommand{\N}{\mathbb{N}}
\newcommand{\Z}{\mathbb{Z}}

\newtheorem{theorem}{Theorem}[section]

\newtheorem{fact}[theorem]{Theorem}
\newtheorem{cor}[theorem]{Corollary}
\newtheorem{lemma}[theorem]{Lemma}
\newtheorem{ex}[theorem]{Example}
\newtheorem{prop}[theorem]{Proposition}

\newtheorem{fact1}{Theorem}
\newtheorem{cor1}[fact1]{Corollary}

\title[Realizability of Thermodynamic Phases]{Which Phases Are Thermodynamically Realizable? A Local Entropy Criterion}

\author{C. Evans Hedges}

\date{April 11, 2026}
\begin{document}

\begin{abstract}
In the variational approach to statistical mechanics, equilibrium states are the rigorous analogues of thermodynamic phases; the question of which invariant measures can arise as equilibrium states is therefore the question of which phases are thermodynamically realizable. We prove that for continuous actions of locally compact amenable groups on compact metrizable spaces with finite topological entropy, an ergodic measure $\mu$ is an equilibrium state for some continuous potential if and only if the entropy map $h$ is upper semicontinuous at $\mu$; equivalently, the unrealizable phases are exactly those hidden behind the convex envelope of the free energy. 

More generally, the same criterion applies whenever $(X, T)$ has bounded entropy and embeds as an invariant subsystem of a compact metrizable system. As a canonical case, one-point compactification yields a $C_0$-potential realization theorem for locally compact $\sigma$-compact systems, with applications to countable-state Markov shifts. We also show that the equilibrium-face realization stated by Jenkinson (2006) omits a necessary continuity hypothesis, exhibiting a counterexample on the full shift, and give the sharp corrected statement: a weak-$*$ closed set $\mathcal{E}$ of ergodic measures determines an equilibrium face if and only if $h|_{\mathcal{E}}$ is continuous and $h$ is upper semicontinuous at each point of $\mathcal{E}$.

\end{abstract}

\keywords{Equilibrium states, thermodynamic formalism, Choquet simplex, entropy, ergodic optimization, variational principle, Maxwell construction, Markov shifts}

\subjclass[2020]{37D35, 37A35, 37B10, 82B20, 46A55}

\maketitle

\section{Introduction}

In the variational formulation of thermodynamic formalism, equilibrium states play the role of thermodynamic phases: they are the invariant measures that maximize the pressure $h(\mu) + \int \phi \, d\mu$ (equivalently minimize the free energy) for a given potential (interaction) $\phi$. A basic question is which phases are thermodynamically realizable, i.e., which invariant measures arise as equilibrium states for some continuous potential. This paper gives a complete answer: an ergodic measure $\mu$ is attainable as an equilibrium state if and only if the entropy map $h$ is upper semicontinuous at $\mu$. The obstruction to realizability is thus a local regularity condition on the entropy landscape near the phase in question. This result extends to weak-$*$ closed sets of ergodic measures $\mathcal{E}$ with the additional hypothesis that $h$ is continuous when restricted to $\mathcal{E}$. More generally, the criterion applies whenever $(X, T)$ embeds as an invariant subsystem of a compact metrizable system with bounded entropy; one-point compactification then yields a $C_0$-potential realization theorem for locally compact $\sigma$-compact systems, with applications to countable-state Markov shifts.

The equilibrium-state realization problem sits between classical thermodynamic formalism and ergodic optimization. In lattice statistical mechanics, the Gibbs/equilibrium correspondence goes back to Dobrushin \cite{dobrushin1968} and Lanford--Ruelle \cite{lanford_ruelle1969}; in uniformly hyperbolic dynamics, the corresponding framework was developed by Sinai \cite{sinai1972}, Bowen \cite{bowen1975}, Ruelle \cite{ruelle1973}, and Walters \cite{walters1975}. Israel \cite{israel1979} recast the theory in a convex-analytic framework developed specifically for lattice gas models, and the Israel--Phelps \cite{israel_phelps1984} study of the subdifferential of the pressure was motivated by the geometry of phase diagrams. In the Choquet-theoretic setting, Phelps \cite{phelps2001} proved that when the entropy function on the simplex of invariant measures is affine and upper semicontinuous, every ergodic measure is the unique equilibrium state of some continuous potential. On the ergodic-optimization side, Jenkinson \cite{jenkinson} proved that every ergodic measure is uniquely maximizing and stated that, under global upper semicontinuity, weak-$*$ closed ergodic sets generate equilibrium faces (see also \cite{jenkinson_survey}). The main compact result of the present paper is a local sharpening of the Israel--Phelps framework, replacing global upper semicontinuity of the entropy map by upper semicontinuity at the chosen ergodic measure.

Specifically, let $G$ be a locally compact amenable group acting on a compact metric space $X$ via homeomorphisms $\{T_g\}_{g \in G}$. Let $M_T(X)$ be the collection of $T$-invariant probability measures on $X$ and let $C(X)$ be the collection of all real-valued potentials on $X$. We say that $\mu \in M_T(X)$ is an equilibrium state for $\phi \in C(X)$ if $h(\mu) + \int \phi d\mu = P(\phi)$ where $h$ is the Kolmogorov-Sinai entropy map and $P$ is the pressure function, which via the variational principle can be shown to satisfy $P(\phi) = \sup \{ h(\nu) + \int \phi d\nu : \nu \in M_T(X)\}$. Throughout, upper semicontinuity of $h$ refers to the weak-$*$ topology on $M_T(X)$. We can now state our primary theorem: 

\begin{fact1}\label{dynamical theorem intro} If $\mu \in M_T^{erg}(X)$ and $h$ is upper semicontinuous at $\mu$, then there exists $\phi \in C(X)$ such that $\mu$ is the unique equilibrium state for $\phi$.
\end{fact1}

Since the converse is immediate, we obtain: 

\begin{cor1}\label{full equivalence intro} For an ergodic $\mu \in M_T^{erg}(X)$, the following are equivalent:
\begin{enumerate}
\item $h$ is upper semicontinuous at $\mu$.
\item $\mu$ is an equilibrium state for some $\phi \in C(X)$.
\item $\mu$ is the unique equilibrium state for some $\phi \in C(X)$.
\end{enumerate}
\end{cor1}

We note here that by combining Corollary~\ref{full equivalence intro} with \cite{hedges2025}, we see that $\mu$ is a freezing state for some potential if and only if $h$ is upper semicontinuous at $\mu$.

Corollary~\ref{dual formula} provides an equivalent formulation via the pointwise dual formula: for ergodic $\mu$, $h$ is upper semicontinuous at $\mu$ if and only if 
$$h(\mu) = \min_{\phi \in C(X)} \left(P(\phi) - \int \phi \, d\mu\right).$$
This dual formula admits a convex-analytic interpretation in the spirit of Israel \cite{israel1979} and Israel--Phelps \cite{israel_phelps1984}. Setting $\Phi(\mu) = -h(\mu)$, the pressure $P = \Phi^*$ is the Legendre-Fenchel conjugate, and the biconjugate $\Phi^{**}(\mu) = \sup_\phi (\hat{\phi}(\mu) - P(\phi)) = -\inf_\phi (P(\phi) - \hat{\phi}(\mu))$ recovers the continuous-affine majorant envelope of $h$ at $\mu$. The dual formula is then a pointwise Fenchel-Moreau theorem: $\Phi^{**}(\mu) = \Phi(\mu)$ if and only if $\Phi$ is lower semicontinuous at $\mu$. 

In the language of classical thermodynamics, the passage from $\Phi$ to $\Phi^{**}$ is the Maxwell construction (see \cite{israel1979}): the biconjugate replaces a non-convex free energy with its convex envelope, and phases where $\Phi^{**}(\mu) < \Phi(\mu)$ are thermodynamically inaccessible. Our main result thus characterizes the realizable pure phases as exactly those not hidden behind the Maxwell construction: an ergodic measure is an equilibrium state for some interaction if and only if it lies on the convex envelope of the free energy landscape.

The singleton result extends to the equilibrium-face realization problem. In \cite{jenkinson}, Jenkinson stated (Theorems~5 and~6) that under global upper semicontinuity of entropy, every weak-$*$ closed set of ergodic measures determines an equilibrium face. We show that this formulation omits a necessary continuity hypothesis (Example~\ref{face counterexample}), and give the corrected statement:

\begin{fact1}\label{local face intro} Let $\mathcal{E} \subset M_T^{erg}(X)$ be nonempty and weak-$*$ closed. If $h|_{\mathcal{E}}$ is weak-$*$ continuous and $h$ is upper semicontinuous at every $\mu \in \mathcal{E}$, then there exists $\phi \in C(X)$ such that the set of equilibrium states for $\phi$ is exactly $\overline{\operatorname{co}}(\mathcal{E})$.
\end{fact1}

The continuity hypothesis on $h|_{\mathcal{E}}$ is sharp: if $F = \operatorname{Eq}(\phi)$ for any $\phi \in C(X)$, then $h|_F$ is automatically continuous affine (Proposition~\ref{necessary condition}). In physical terms, the face realization problem asks which families of pure thermodynamic phases can coexist at equilibrium for a single interaction; our corrected theorem shows that coexistence requires both a local entropy regularity condition at each phase and continuity of entropy across the coexisting phases.

These results extend beyond the compact setting. If $(X, T)$ embeds as an invariant subsystem of a compact metrizable system $(\bar{X}, \bar{T})$ and the entropy map on $M_T(X)$ is bounded, the realization theorem carries over. Natural examples include countable-state Markov shifts, which embed into compact shift spaces via one-point compactification of the alphabet; geodesic flows on non-compact finite-volume hyperbolic manifolds, which admit compactifications to which the flow extends; and amenable group actions on non-compact homogeneous spaces that admit equivariant compactifications.

\begin{fact1}\label{non-compact general intro} Let $(\bar{X}, \bar{T})$ be a compact metrizable system, let $X \subset \bar{X}$ be $\bar{T}$-invariant, and suppose $h: M_T(X) \to [0, \infty)$ is bounded. If $\mu \in M_T^{erg}(X)$ and $h$ is upper semicontinuous at $\mu$, then there exists $\phi \in C(\bar{X})$ such that $\mu$ is the unique equilibrium state for $\phi|_X$.
\end{fact1}

When $X$ is locally compact and $\sigma$-compact, one-point compactification provides a canonical ambient compact system and upgrades the potential to $C_0(X)$:

\begin{fact1}\label{non-compact theorem intro} Let $G$ be a locally compact amenable group acting continuously by homeomorphisms on a metrizable, locally compact, $\sigma$-compact space $X$, with $\sup_{\mu \in M_T(X)} h(\mu) < \infty$. Let $\mu \in M_T^{erg}(X)$. Then $h$ is upper semicontinuous at $\mu$ if and only if there exists $f \in C_0(X)$ such that $\mu$ is the unique equilibrium state for $f$.
\end{fact1}

By \cite{iommi_todd_velozo}, we have an immediate application to countable-state Markov shifts: 

\begin{cor1}\label{countable markov cor intro} Let $X$ be a locally compact two-sided countable-state Markov shift with $\sup_\mu h(\mu) < \infty$. For every ergodic $\mu \in M_\sigma(X)$, there exists $f \in C_0(X)$ such that $\mu$ is the unique equilibrium state for $f$.
\end{cor1}

The structure of the rest of this paper is as follows: we begin with Section 2 on the relevant preliminaries and definitions. In Section 3 we prove our main results, starting in the abstract Choquet setting and moving to the dynamical setting, as well as identifying a few immediate consequences of these results. In Section 4 we extend the results to non-compact systems: first via a general invariant-subsystem embedding, then via one-point compactification to locally compact $\sigma$-compact systems with $C_0$ potentials, and finally with an application to countable-state Markov shifts.

\setcounter{fact1}{0}

\subsection*{Acknowledgements} The author thanks Prof. Ronnie Pavlov for helpful conversations during the early stages of this project. In accordance with journal policy, the author discloses that a generative AI tool was used for editorial assistance and computational exploration during manuscript preparation. The problem formulation, proof strategies, and mathematical arguments are the author's own work.

\section{Preliminaries}

\subsection{Choquet simplices and facial topology}

We refer to Phelps \cite{phelps2001} for background on Choquet simplices. Let $K$ be a compact metrizable Choquet simplex with extreme boundary $E = \operatorname{ex}(K)$. Write $A(K)$ for the space of continuous affine real-valued functions on $K$. For each $x \in K$, let $\xi_x$ denote the unique representing Borel probability measure on $E$, characterized by $a(x) = \int_E a(\eta) \, d\xi_x(\eta)$ for all $a \in A(K)$.

A bounded Borel function $f: K \to \R$ is \emph{strongly affine} if $f(x) = \int_E f(\eta) \, d\xi_x(\eta)$ for all $x \in K$; bounded Borel strong affineness is the only regularity assumption on the entropy map used throughout this paper. A nonempty convex subset $F \subset K$ is a \emph{face} if $\alpha x + (1-\alpha)y \in F$ with $0 < \alpha < 1$ implies $x, y \in F$. The \emph{facial topology} on $E$, denoted $d_s E$, is the weakest topology making $a|_E$ continuous for every $a \in A(K)$. This topology is the natural one in which to construct continuous functions on $E$ since they admit continuous affine extensions on all of $K$ by Lau's extension theorem: 

\begin{fact}[Lau {\cite{lau}}; see also {\cite[Theorem~11.7]{phelps2001}}]\label{lau extension}
Every continuous function $g: (E, d_s E) \to \R$ extends to a function $a \in A(K)$.
\end{fact}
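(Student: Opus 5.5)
The plan is to build the extension $a$ by successive approximation. At each step we approximate the current error on $E$ to within a fixed fraction by an element of $A(K)$ of controlled norm, and then sum a geometric series. Completeness of $A(K)$ in the sup norm gives the limit.

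\emph{Step 0 (reductions).} The facial topology $d_sE$ is compact. Every open cover by sets $\{|a-c|<\delta\}$ pulls back to a weak-$*$ cover, and one can argue via the weak-$*$ compactness of $K$ together with the fact that closed sets of $d_sE$ have the form $E\cap F$ for a closed face $F$. Hence a $d_sE$-continuous $g$ is bounded, and after scaling we may assume $\sup_E|g|\le 1$. We also use a standard fact of Alfsen's: in a simplex, a subset $C\subset E$ is $d_sE$-closed if and only if $C=E\cap F$ for some weak-$*$ closed face $F$, and we may take $F=\overline{\operatorname{co}}(C)$.

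\emph{Step 1 (one approximation step).} Given a $d_sE$-continuous $g$ with $\sup_E|g|\le 1$, set $C_+=\{g\ge 1/3\}$ and $C_-=\{g\le -1/3\}$. These are disjoint $d_sE$-closed subsets of $E$, so $C_\pm=E\cap F_\pm$ with $F_\pm$ closed faces. The faces themselves are disjoint: a point of $F_+\cap F_-$ would have representing measure supported in both $\overline{C_+}\cap E\subset F_+$ and $F_-$, and hence would produce an extreme point in $F_+\cap F_-\cap E=C_+\cap C_-=\emptyset$.

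Now apply Edwards' separation theorem. Define $f$ equal to $1/3$ on $F_+$ and $-1/3$ elsewhere; it is upper semicontinuous and convex, since $F_+$ is a closed face. Define $k$ equal to $-1/3$ on $F_-$ and $1/3$ elsewhere; it is lower semicontinuous and concave. Disjointness of the faces gives $f\le k$. Edwards' theorem then yields $a_1\in A(K)$ with $f\le a_1\le k$. In particular $|a_1|\le 1/3$, $a_1=1/3$ on $C_+$ and $a_1=-1/3$ on $C_-$. Checking the three regions of $E$ separately gives $\sup_E|g-a_1|\le 2/3$. Moreover $g-a_1$ is again $d_sE$-continuous, because $a_1|_E$ is $d_sE$-continuous by definition of the facial topology.

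\emph{Step 2 (iteration).} Apply Step 1 to $\tfrac32(g-a_1)$, and so on. This produces $a_n\in A(K)$ with $\|a_n\|_\infty\le \tfrac13(2/3)^{n-1}$ and $\sup_E\bigl|g-\sum_{j\le n}a_j\bigr|\le (2/3)^n$. The series $a=\sum_j a_j$ converges uniformly on $K$, so $a\in A(K)$, and $a|_E=g$.

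The main obstacle is Step 1, specifically justifying that the level sets yield \emph{disjoint closed faces}. This requires the characterization of $d_sE$-closed sets as traces of closed faces, which is where the simplex hypothesis enters. I would try to derive it first from the fact that closed faces of a simplex are split faces, and then invoke Edwards' theorem, which I take as known Choquet theory.
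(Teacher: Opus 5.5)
The paper gives no proof of this statement; it only cites Lau and Phelps. Your argument therefore has to stand on its own, and it does not, because it silently uses two different topologies on $E$.

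Read literally, the paper's $d_sE$ (the weakest topology making every $a|_E$, $a\in A(K)$, continuous) is just the relative weak-$*$ topology. This is because $A(K)$ separates points of the compact set $K$ and so induces its topology; the paper's identification of $d_sE$ with the weak-$*$ topology records exactly this. Under this reading your closing sentence of Step~1 is true by definition, but the rest fails as soon as $E$ is not closed:
\begin{itemize}
\item Step~0 fails: $E$ is not compact, and $e\mapsto 1/\rho(e,x_0)$, with $\rho$ a metric on $K$ and $x_0\in\overline{E}\setminus E$, is continuous and unbounded.
\item Relatively closed subsets of $E$ need not be traces of closed faces.
\item In Step~1, the sets $\overline{\operatorname{co}}(C_\pm)$ need not be faces and can intersect, since $C_+$ and $C_-$ may accumulate at a common point of $\overline{E}\setminus E$.
\end{itemize}
In fact the statement is false under this reading for every non-Bauer simplex. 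Take $K=M_T(X)$ for the full shift, which is a Poulsen simplex. Pick $x_0\in K\setminus E$ and distinct $e_n\in E$ with $e_n\to x_0$. Put $g(e_n)=(-1)^n$ on the closed discrete set $\{e_n\}\subset E$ and extend by Tietze to a bounded continuous function on $E$. Any $a\in A(K)$ with $a|_E=g$ would give $a(x_0)=\lim_n(-1)^n$, which does not exist. So the theorem can only be meant with Alfsen's facial topology, whose closed sets are exactly the traces $E\cap F$ of closed faces $F$.

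With that topology, the conclusions of Step~0 and the construction of $a_1$ are correct. Disjointness is easier than you suggest: $F_+\cap F_-$ is a closed face, and if nonempty it contains an extreme point of $K$, which would lie in $C_+\cap C_-$. The real gap is the claim that $a_1|_E$ is $d_sE$-continuous ``by definition''. Restrictions of elements of $A(K)$ are in general not facially continuous, and in the Poulsen simplex no nonconstant $a|_E$ is. Indeed, suppose $\{a\ge c\}\cap E=F\cap E$ with $F$ a closed face and $\min a<c<\max a$. Density of $E$ gives $F\supseteq\{a>c\}$. Then $(1-\lambda)e_1+\lambda e_2\in F$ for $e_1,e_2\in E$ with $a(e_2)<c<a(e_1)$ and small $\lambda>0$, and since $F$ is a face this forces $e_2\in F\cap E$, a contradiction. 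Hence $g-a_1$ need not be facially continuous, its level sets need not be traces of closed faces, and Step~2 cannot even start.

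The fix is to never iterate on the error.
\begin{itemize}
\item Given $\varepsilon>0$, take a grid $c_0<\dots<c_m$ of mesh at most $\varepsilon$ covering $[\inf_E g,\sup_E g]$, and closed faces $F_i^+,F_j^-$ with traces $\{g\ge c_i\}$ and $\{g\le c_j\}$.
\item Set $f=\max_i f_i$ and $k=\min_j k_j$, where $f_i=c_i$ on $F_i^+$ and $-M$ elsewhere, $k_j=c_j$ on $F_j^-$ and $M$ elsewhere, with $M$ large.
\item Then $f$ is convex and u.s.c., $k$ is concave and l.s.c., and $f\le k$: for $c_i>c_j$ the closed face $F_i^+\cap F_j^-$ has no extreme points and is therefore empty.
\item Edwards' theorem gives $a_\varepsilon\in A(K)$ with $c_i\le a_\varepsilon(e)\le c_{i+1}$ whenever $c_i\le g(e)\le c_{i+1}$, so $|a_\varepsilon-g|\le\varepsilon$ on $E$.
\item By Bauer's maximum principle, $\sup_K|b|=\sup_E|b|$ for $b\in A(K)$. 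So $(a_{1/n})$ is Cauchy in $A(K)$, and its limit extends $g$.
\end{itemize}
The classical alternative is to set $a(x)=\int_E g\,d\xi_x$ and obtain continuity from the upper semicontinuity of $x\mapsto\xi_x(F)$ for closed faces $F$.
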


\subsection{Invariant measures, entropy, and pressure}\label{entropy hypotheses}

Let $X$ be a compact metrizable space and $G$ a locally compact amenable group acting continuously on $X$ by homeomorphisms, denoted $T = \{T_g\}_{g \in G}$. Let $M_T(X)$ denote the set of $T$-invariant Borel probability measures on $X$, equipped with the weak-$*$ topology, and $M_T^{erg}(X)$ for the ergodic invariant measures. In the compact amenable-group setting, $M_T(X)$ is nonempty, and is a compact metrizable Choquet simplex under the weak-$*$ topology with $\operatorname{ex}(M_T(X)) = M_T^{erg}(X)$ (see e.g. \cite{phelps2001}). Further, for each $\mu \in M_T(X)$, the representing measure $\xi_\mu$ on $M_T^{erg}(X)$ is the ergodic decomposition of $\mu$. 

In the dynamical setting we also have the two following important simplifications relative to the general Choquet simplex case. First, every continuous affine function on $M_T(X)$ has the form $\mu \mapsto \int f \, d\mu$ for some $f \in C(X)$ \cite{phelps2001, walters1982}. As a consequence, the facial topology on $M_T^{erg}(X)$ coincides with the weak-$*$ subspace topology and we provide a brief proof here for context. 

\begin{prop}\label{facial weak star} The facial topology on $M_T^{erg}(X)$ coincides with the weak-$*$ subspace topology inherited from $M_T(X)$.
\end{prop}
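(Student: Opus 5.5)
The plan is to compare the two topologies through the functions that generate them. By definition, the facial topology is the weakest topology on $E = M_T^{erg}(X)$ making $a|_E$ continuous for every $a \in A(M_T(X))$. Likewise, the weak-$*$ subspace topology is the weakest topology on $E$ making every $\mu \mapsto \int f\,d\mu$, $f \in C(X)$, continuous; this is the restriction to $E$ of the weak-$*$ topology on $M_T(X)$, since the initial topology is compatible with passing to subspaces. So it suffices to show that these two generating families of functions on $E$ coincide.

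The first inclusion uses that every $f \in C(X)$ gives an element $\hat f(\mu) = \int f\,d\mu$ of $A(M_T(X))$. This map is affine by linearity of the integral and weak-$*$ continuous by the definition of the weak-$*$ topology. Hence each weak-$*$ generating function is among the facial generating functions, and the facial topology is finer than or equal to the weak-$*$ subspace topology.

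For the reverse inclusion, we invoke the fact recorded just above the proposition: every $a \in A(M_T(X))$ has the form $\hat f$ for some $f \in C(X)$ \cite{phelps2001, walters1982}. Each $a|_E$ is therefore weak-$*$ continuous on $E$, so the facial topology is coarser than or equal to the weak-$*$ subspace topology. Combining the two inclusions gives equality.

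The only substantive input is the representation $A(M_T(X)) = \{\hat f : f \in C(X)\}$, which I take as given from the cited references. If one wanted to justify it, I would argue roughly as follows: extend $a$ to a continuous affine function on $M(X)$, then use that continuous affine functions on the probability simplex $M(X)$ are exactly the $\hat g$ with $g(x) = a(\delta_x)$. The delicate point is the passage from $M_T(X)$ to $M(X)$. This can be handled by a Hahn--Banach argument, using that $C(X)$ modulo coboundary-type functions separates points of $M_T(X)$ together with a closed-range property. That step is the main obstacle, but it lies outside the proposition, which simply cites it.
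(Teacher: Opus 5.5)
Your proposal is correct and is essentially the paper's argument. Both rest on the cited identification $A(M_T(X)) = \{\hat f : f \in C(X)\}$: it shows the facial topology and the weak-$*$ subspace topology are initial topologies on $E$ for the same family of functions. You merely spell out the two inclusions separately, and your closing sketch of why that identification holds goes beyond what the paper does (it only cites Phelps and Walters) and is not needed for the proposition.
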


\begin{proof} Set $K = M_T(X)$ and $E = M_T^{erg}(X) = \operatorname{ex}(K)$. By definition, $d_s E$ is the weakest topology on $E$ for which $a|_E$ is continuous for every $a \in A(K)$. By the identification quoted above, these are exactly the maps $\mu \mapsto \int f \, d\mu$ with $f \in C(X)$ (possibly represented by more than one $f$, which does not change which maps are required to be continuous).

The weak-$*$ topology on $K$ is the weakest topology for which $\mu \mapsto \int f \, d\mu$ is continuous on $K$ for every $f \in C(X)$, hence the subspace topology on $E$ is the weakest topology for which those same maps are continuous on $E$. Thus $d_s E$ agrees with the weak-$*$ subspace topology.
\end{proof}

In particular, Proposition~\ref{facial weak star} ensures that functions constructed on $M_T^{erg}(X)$ that are continuous in the facial topology are automatically weak-$*$ continuous, and vice versa.

For the dynamical results below, we take $h: M_T(X) \to [0, \infty)$ to be the Kolmogorov--Sinai entropy map, defined via the Ornstein--Weiss formalism for amenable-group actions (see \cite{downarowicz_entropy}). However, the proofs in Section~\ref{primary results section} use only three properties of $h$:
\begin{enumerate}
\item $h$ is strongly affine: $h(\mu) = \int_{M_T^{erg}(X)} h(\eta) \, d\xi_\mu(\eta)$ for all $\mu \in M_T(X)$, 
\item $h_{\mathrm{top}}(X, T) = \sup_{\mu \in M_T(X)} h(\mu) < \infty$, and 
\item the variational principle \cite{ollagnier_pinchon1982, downarowicz_entropy}: $P(\phi) = \sup_{\mu \in M_T(X)} \left(h(\mu) + \int \phi \, d\mu\right)$ for every $\phi \in C(X)$.
\end{enumerate}
These three properties hold for continuous actions of locally compact amenable groups on compact metrizable spaces with finite topological entropy, but may hold for other dynamical systems outside of the amenable group setting. 

For $\phi \in C(X)$, define the set of \emph{equilibrium states} $\operatorname{Eq}(\phi) = \{\mu \in M_T(X) : P(\phi) = h(\mu) + \int \phi \, d\mu\}$. In order to construct such measures in the sequel it will be useful to use the following fact from \cite{jenkinson}: 

\begin{fact}[Jenkinson \cite{jenkinson}]\label{jenkinson theorem}
For every ergodic $\mu \in M_T^{erg}(X)$, there exists $\phi \in C(X)$ such that $\mu$ is the unique measure in $M_T(X)$ satisfying $\int \phi \, d\mu = \sup_{\nu \in M_T(X)} \int \phi \, d\nu$. More generally, if $\mathcal{E} \subset M_T^{erg}(X)$ is nonempty and weak-$*$ closed, then there exists $\phi \in C(X)$ such that $\overline{\operatorname{co}}(\mathcal{E})$ is exactly the set of measures maximizing $\nu \mapsto \int \phi \, d\nu$.
\end{fact}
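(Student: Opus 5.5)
The plan is to build the maximizing potential directly from the Choquet structure. We realize a suitable nonpositive continuous function on the extreme boundary as a continuous affine function, using Lau's extension theorem (Theorem~\ref{lau extension}) and Proposition~\ref{facial weak star}. We then pull it back to $C(X)$ via the identification $A(M_T(X)) = \{\nu \mapsto \int f\,d\nu : f \in C(X)\}$.

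Set $K = M_T(X)$, $E = M_T^{erg}(X)$, and fix a metric $d$ compatible with the weak-$*$ topology on $K$. I treat the general case, since the singleton case is $\mathcal{E} = \{\mu\}$; I take "weak-$*$ closed" to mean closed in $K$, so $\mathcal{E}$ is compact. Define $g : E \to \R$ by $g(\eta) = -d(\eta, \mathcal{E})$. This is weak-$*$ continuous on $E$, hence continuous for $d_s E$ by Proposition~\ref{facial weak star}. By Theorem~\ref{lau extension}, $g$ extends to some $a \in A(K)$, and $a(\nu) = \int \phi\,d\nu$ for some $\phi \in C(X)$. Since $a$ is continuous and affine, the barycentric formula gives
\[
\int \phi\,d\nu = a(\nu) = \int_E g\,d\xi_\nu \le 0 \quad \text{for all } \nu \in K .
\]
Equality holds on $\mathcal{E}$, so the supremum of $\nu \mapsto \int \phi\,d\nu$ is $0$. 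A measure $\nu$ attains this supremum if and only if $\xi_\nu(\{\eta \in E : g(\eta) < 0\}) = 0$, that is, if and only if $\xi_\nu(E \setminus \mathcal{E}) = 0$, since $g<0$ exactly off $\mathcal{E}$.

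It remains to identify $\{\nu \in K : \xi_\nu(\mathcal{E}) = 1\}$ with $\overline{\operatorname{co}}(\mathcal{E})$; this is the main step requiring care. For the inclusion of the maximizer set into $\overline{\operatorname{co}}(\mathcal{E})$: if $\xi_\nu$ is a probability measure on the compact set $\mathcal{E}$, then $\nu$ is its barycenter. Barycenters of probability measures on a compact set lie in its closed convex hull (approximate $\xi_\nu$ weak-$*$ by finitely supported measures on $\mathcal{E}$ and use continuity of the barycenter map, e.g.\ \cite[Prop.~1.2]{phelps2001}).

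For the reverse inclusion, take $\nu \in \overline{\operatorname{co}}(\mathcal{E})$. The same result of Phelps gives a probability measure $m$ on $\mathcal{E}$ with barycenter $\nu$. Because $m$ is carried by the extreme points (a Borel set, as $K$ is metrizable), it is maximal in the Choquet order. Uniqueness of representing measures in the simplex $K$ then forces $m = \xi_\nu$, so $\xi_\nu(\mathcal{E}) = 1$ and $\nu$ is a maximizer.

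For $\mathcal{E} = \{\mu\}$ this says the maximizers are exactly the $\nu$ with $\xi_\nu = \delta_\mu$, i.e.\ $\nu = \mu$, which gives the unique-maximizing statement. The only delicate points are the barycentric formula for elements of $A(K)$ and the uniqueness argument identifying $m$ with $\xi_\nu$. Both are standard facts for metrizable Choquet simplices, and I expect the second to be the main thing to state carefully.
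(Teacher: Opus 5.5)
Your proof has a genuine gap at the extension step. A weak-$*$ continuous function on $E$ such as $g=-d(\cdot,\mathcal{E})$ is in general \emph{not} the restriction of any element of $A(K)$, so you cannot chain Proposition~\ref{facial weak star} with Theorem~\ref{lau extension}. Lau's theorem concerns the facial (structure) topology of Alfsen and Effros, whose closed sets are the traces $F\cap E$ of closed faces $F$ of $K$. That topology is always compact, so it agrees with the relative weak-$*$ topology only when $E$ is closed in $K$, i.e.\ when $K$ is a Bauer simplex. Proposition~\ref{facial weak star} holds only for the topology generated by $A(K)$, which is automatically the relative topology; that is not the topology in Lau's theorem.

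Concretely, take the full two-sided shift on two symbols. There $E$ is dense in $K$ (the Poulsen simplex), so any $a\in A(K)$ with $a|_E=g$ would have to equal $-d(\cdot,\mathcal{E})$ on all of $K$. For $\mathcal{E}=\{e_1,e_2\}$ with $e_1\neq e_2$, affinity gives $a(\tfrac12(e_1+e_2))=0$. But $\tfrac12(e_1+e_2)\notin\mathcal{E}$, so $-d(\tfrac12(e_1+e_2),\mathcal{E})<0$, a contradiction. For $\mathcal{E}=\{\mu\}$ the extension would force $d(\cdot,\mu)$ to be affine on $K$, which fails for the usual metrics. In fact, for a Poulsen simplex any two nonempty facially open subsets of $E$ intersect, so only constants are facially continuous and Lau's theorem gives nothing. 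The paper offers no proof of this statement; it quotes it from Jenkinson.

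What is missing is a genuinely simplex-theoretic reason why the closed face $F=\overline{\operatorname{co}}(\mathcal{E})$ is exposed, i.e.\ some $b\in A(K)$ with $b\ge 0$ and $b^{-1}(0)=F$. One way to obtain it:
\begin{enumerate}
\item The indicator $\chi_F$ is upper semicontinuous, and it is convex precisely because $F$ is a face.
\item Fix $x_0\notin F$ and split any representing measure $m$ of $x_0$ along $F$. Since $\nu\mapsto\xi_\nu$ is affine and, by your identification, $F=\{\nu:\xi_\nu(\mathcal{E})=1\}$, you get $m(F)\le\xi_{x_0}(\mathcal{E})<1$.
\item The representing-measure formula for the upper envelope of an upper semicontinuous function then yields $\ell\in A(K)$ with $\ell\ge\chi_F$ and $\ell(x_0)<1$.
\item Edwards' separation theorem, applied to $\chi_F\le\min(\ell,1)$, gives $a\in A(K)$ with $\chi_F\le a\le\min(\ell,1)$. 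This is where the simplex property enters. Then $1-a\ge 0$ vanishes on $F$ and is positive at $x_0$.
\item By Lindel\"of, countably many such functions $1-a_n$ have positivity sets covering $K\setminus F$. Summing, $b=\sum_n 2^{-n}(1-a_n)/(1+\|a_n\|_\infty)$ lies in $A(K)$, is nonnegative, and vanishes exactly on $F$.
\item Writing $-b(\nu)=\int\phi\,d\nu$ with $\phi\in C(X)$, the maximizers of $\nu\mapsto\int\phi\,d\nu$ are exactly $F$.
\end{enumerate}
Your barycentric identification of $\{\nu:\xi_\nu(\mathcal{E})=1\}$ with $F$, and your reduction of the singleton case to the general one, are correct. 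The identification is exactly what feeds step 2.
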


\section{Primary Results}\label{primary results section}

In this section we prove our main results, starting in the abstract Choquet setting with Theorem~\ref{abstract support}, which will serve as the foundation for the dynamical implications. 

\subsection{Abstract Choquet Theorem}

\begin{fact}\label{abstract support} Let $K$ be a compact metrizable Choquet simplex, let $E = \operatorname{ex}(K)$, and let $A(K)$ denote the continuous affine real-valued functions on $K$. Let $h: K \rightarrow \R$ be bounded and strongly affine and fix $\mu \in E$. If
\begin{enumerate}
\item $h$ is upper semicontinuous at $\mu$ and 
\item there exists $p \in A(K)$ with $p \geq 0$ and $p^{-1}(0) = \{\mu\}$, 
\end{enumerate}
Then there exists $a \in A(K)$ such that
$$a \geq h \quad \text{on } K, \qquad a(\mu) = h(\mu).$$
\end{fact}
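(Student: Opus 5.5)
Our plan is to build $a$ on the extreme boundary $E$ as a facially continuous function that dominates $h|_E$ and touches it at $\mu$, extend it to all of $K$ by Lau's extension theorem (Theorem~\ref{lau extension}), and then pass the inequality from $E$ to $K$ using strong affineness. Set $M=\sup_K |h|<\infty$. Since $h$ is upper semicontinuous at $\mu$, for each $n\ge 1$ there is a weak-$*$ open neighbourhood $U_n$ of $\mu$ in $K$ on which $h<h(\mu)+1/n$. The function $p$ is the device that turns these neighbourhoods into affine control. $p$ is continuous, $p\ge 0$, and $p$ vanishes only at $\mu$, and $K\setminus U_n$ is compact, so $\delta_n:=\min_{K\setminus U_n}p>0$. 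Hence on $K$ we have $h\le h(\mu)+1/n+(2M/\delta_n)\,p$. Each right-hand side is already in $A(K)$, but equality at $\mu$ fails by $1/n$. The remaining task is to combine these bounds into a single continuous majorant that is exact at $\mu$.

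On $E$ we define
\[
g(\eta)=h(\mu)+\inf_{n\ge1}\Bigl(\tfrac1n+\tfrac{2M}{\delta_n}\,p(\eta)\Bigr),\qquad \eta\in E .
\]
This is an infimum of affine continuous functions, so it is upper semicontinuous rather than continuous, and it is concave. To obtain a continuous function we replace the infimum by a concave modulus. Let $\omega(t)=\inf_n(1/n+2Mt/\delta_n)$ for $t\ge0$. Then $\omega$ is concave, nondecreasing and finite, with $\omega(0)=0$; it is continuous on $(0,\infty)$ and also at $0$, since $\omega(t)\le 1/n+2Mt/\delta_n\to 1/n$ as $t\to0$. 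We then set $g=h(\mu)+\omega\circ p$ on $E$. Because $p|_E$ is facially continuous (indeed $p\in A(K)$), $g$ is continuous on $(E,d_sE)$. Moreover $g\ge h$ on $E$ and $g(\mu)=h(\mu)$. Lau's theorem now gives $a\in A(K)$ with $a|_E=g$.

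To pass from $E$ to all of $K$, take $x\in K$ with representing measure $\xi_x$. Since $a$ is continuous affine, $a(x)=\int_E a\,d\xi_x$. Since $h$ is strongly affine, $h(x)=\int_E h\,d\xi_x$. Therefore $a(x)-h(x)=\int_E(g-h)\,d\xi_x\ge0$, and $a(\mu)=g(\mu)=h(\mu)$ because $\mu\in E$.

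We expect the main obstacle to be the middle step, producing a majorant on $E$ that is genuinely continuous in the facial topology while still being exact at $\mu$. Pointwise upper semicontinuity only yields the family of $1/n$-approximate bounds, and the concave envelope $\omega$ is what assembles them into one function. Two points need care there: first, continuity of $\omega\circ p$ at points of $E$ where $p=0$, which is only $\mu$, and this is exactly where hypothesis (2) is used; second, the fact that Lau's theorem asks only for $d_sE$-continuity, which composition with $p|_E$ supplies. A point to double-check is that $\omega$ takes finite values and that the integral comparison is legitimate: $g$ is continuous and $h$ is bounded Borel, so both integrals are defined.
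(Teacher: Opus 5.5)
Your proof is correct and follows the same route as the paper. You build a $d_sE$-continuous majorant $g=h(\mu)+\psi\circ p$ of $h$ on $E$ with $\psi(0)=0$, extend it by Lau's theorem, and transfer $a\ge h$ to all of $K$ through the representing measures and strong affineness. The only difference is the choice of modulus $\psi$:
\begin{itemize}
\item You take the lower envelope $\omega(t)=\inf_n\bigl(1/n+2Mt/\delta_n\bigr)$ of global affine bounds. Domination is then immediate, and continuity comes from concavity plus your check at $t=0$.
\item The paper uses the staircase sum $\Delta\sum_n 2^{-n}\min(1,p/t_n)$. It is continuous by uniform convergence and dominates $h$ by a step-by-step comparison.
\end{itemize}
Your first displayed $g$ already equals $h(\mu)+\omega\circ p$, so there is no upper-semicontinuity problem that needs replacing.
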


\begin{proof}
Let $H = \sup_{x \in K} h(x)$ and $\Delta = H - h(\mu) \geq 0.$ If $\Delta = 0$, then the constant function $a = h(\mu)$ satisfies the desired conclusion, so assume $\Delta > 0$.

For each $n \geq 1$, by upper semicontinuity of $h$ at $\mu$, choose an open neighborhood $V_n$ of $\mu$ such that $h(x) \leq h(\mu) +  2^{-n} \Delta$ for all $x \in V_n$ and let $V_0 = K$.

Since $p$ is continuous and $p(\mu) = 0$, choose numbers $t_n > 0$ such that
$$t_n < \min\{t_{n-1}, n^{-1} \} \text{, and } \{x \in K : p(x) < t_n\} \subseteq V_n,$$
with $t_0 = +\infty$. In particular, $t_n \searrow 0$.

Now endow $E$ with the facial topology $d_s E$. Because $p \in A(K)$, the restriction $p|_E$ is continuous on $d_s E$. Define for all $\eta \in E$ 
$$r_n(\eta) = \min\!\left(1, \frac{p(\eta)}{t_n}\right).$$
Then each $r_n \in C(d_s E)$, $0 \leq r_n \leq 1$, and $r_n(\mu) = 0$. Now define for all $\eta \in E$ 
$$g(\eta) = h(\mu) + \Delta \sum_{n=1}^{\infty} 2^{-n} r_n(\eta).$$
Because $0 \leq r_n \leq 1$, the series converges uniformly, so $g \in C(d_s E)$ and $g(\mu) = h(\mu)$. 

We now show $g \geq h$ on $E$. Fix $\eta \in E$ with $\eta \neq \mu$. Since $p(\eta) > 0$ and $t_n \downarrow 0$, there exists a unique $n \geq 1$ such that
$$t_n \leq p(\eta) < t_{n-1}.$$
Since $p(\eta) < t_{n-1}$, by construction of $(t_n)$ we know $\eta \in V_{n-1}$. By construction of $V_{n-1}$ we know 
$$h(\eta) \leq h(\mu) + 2^{-(n-1)} \Delta.$$
Additionally, for every $k \geq n$, $p(\eta) \geq t_n \geq t_k$, so it must be the case that $r_k(\eta) = 1$. Therefore
$$g(\eta) - h(\mu) \geq \Delta \sum_{k=n}^{\infty} 2^{-k} = 2^{-(n-1)} \Delta \geq h(\eta) - h(\mu).$$
We have thus shown that $g(\mu) = h(\mu)$ and $g(\eta) \geq h(\eta)$ for all $\eta \in E$.

Since $g \in C(d_s E)$, we can apply Lau's extension theorem to obtain $a \in A(K)$ such that $a|_E = g$. Finally, let $x \in K$, and let $\xi_x$ be its representing measure on $E$. Since $a$ is affine and extends $g$ we know
$$a(x) = \int_E a(\eta) \, d\xi_x(\eta) = \int_E g(\eta) \, d\xi_x(\eta).$$
Since $h$ is strongly affine,
$$h(x) = \int_E h(\eta) \, d\xi_x(\eta) \leq \int_E g(\eta) \, d\xi_x(\eta) = a(x).$$
Since clearly $a(\mu) = g(\mu) = h(\mu)$, we can conclude the desired result. 
\end{proof}

We note here that Theorem~\ref{abstract support} extends to closed subsets of the extreme boundary when the facial topology $d_s E$ coincides with the subspace topology inherited from $K$. Under this assumption, if $\mathcal{E} \subset E$ is nonempty and closed, $h|_{\mathcal{E}}$ is continuous, and $h$ is upper semicontinuous at every point of $\mathcal{E}$, then there exists $a \in A(K)$ with $a \geq h$ on $K$, $a = h$ on $F = \overline{\operatorname{co}}(\mathcal{E})$, and $a > h$ on $K \setminus F$. The proof adapts the staircase construction using a Tietze extension of $h|_{\mathcal{E}}$ and the distance function to $\mathcal{E}$ in place of $p$; the topology coincidence ensures compatibility between the upper semicontinuity neighborhoods and the sublevel sets of the distance function. In the dynamical setting this hypothesis is automatic, and the full argument appears in Theorem~\ref{local face theorem}.

\subsection{Dynamical Implications}

We now turn to the setting of dynamical systems. Although the theorem below is stated for locally compact amenable groups with finite topological entropy, the proof relies only on the three abstract properties of $h$ listed in Section~\ref{entropy hypotheses}. It therefore applies to any group action for which $M_T(X)$ is a Choquet simplex and the entropy map satisfies strong affineness, boundedness, and the variational principle. 

\setcounter{fact1}{0}
\begin{fact1}\label{dynamical theorem} Let $G$ be a locally compact amenable group acting on $X$ by $T$ with finite topological entropy. Let $\mu \in M_T^{erg}(X)$. If $h$ is upper semicontinuous at $\mu$, then there exists $\phi \in C(X)$ such that $\mu$ is the unique equilibrium state for $\phi$. 
\end{fact1}

\begin{proof}
Set $K = M_T(X)$ and $E = M_T^{erg}(X)$. It follows that $K$ is a compact metrizable Choquet simplex with $E = \operatorname{ex}(K)$, and $h$ is strongly affine. By Jenkinson \cite{jenkinson}, there exists $u \in C(X)$ such that $\mu$ is the unique maximizing measure for $u$. Define for all $\nu \in K$ 
$$p(\nu) = \int u \, d\mu - \int u \, d\nu.$$
Then $p \in A(K)$, $p \geq 0$, and $p^{-1}(0) = \{\mu\}$.

By application of Theorem~\ref{abstract support}, we obtain $a \in A(K)$ such that
$$a \geq h \quad \text{on } K, \qquad a(\mu) = h(\mu).$$

As noted in Section~2, there exists $f \in C(X)$ such that $a(\nu) = \int f \, d\nu$ for all $\nu \in K$. Define $\phi = u - f$. Then for every $\nu \in K$,
$$h(\nu) + \int \phi \, d\nu = h(\nu) - a(\nu) + \int u \, d\nu \leq \int u \, d\nu \leq \int u \, d\mu = h(\mu) - a(\mu) + \int u \, d\mu = h(\mu) + \int \phi \, d\mu.$$
Since $\mu$ is the unique maximizing measure for $u$, equality in $\int u \, d\nu \leq \int u \, d\mu$ forces $\nu = \mu$. Hence $\mu$ is the unique equilibrium state for $\phi$.
\end{proof}

As a corollary we arrive at the following complete equivalence: 
\setcounter{fact1}{1}
\begin{cor1}\label{full equivalence} For an ergodic $\mu \in M_T^{erg}(X)$, the following are equivalent:
\begin{enumerate}
\item $h$ is upper semicontinuous at $\mu$.
\item $\mu$ is an equilibrium state for some $\phi \in C(X)$.
\item $\mu$ is the unique equilibrium state for some $\phi \in C(X)$.
\end{enumerate}
\end{cor1}

Additionally, the following pointwise dual statement follows immediately from the definition of pressure combined with Corollary~\ref{full equivalence}: 
\begin{cor}\label{dual formula} For an ergodic $\mu$,
$$h(\mu) = \min_{\phi \in C(X)} \left(P(\phi) - \int \phi \, d\mu\right)$$
if and only if $h$ is upper semicontinuous at $\mu$.
\end{cor}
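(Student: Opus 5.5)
We prove the two implications separately, using the variational principle $P(\phi) = \sup_\nu (h(\nu) + \int \phi\, d\nu)$ and Corollary~\ref{full equivalence}. Set $F_\mu(\phi) = P(\phi) - \int \phi \, d\mu$. By the variational principle, $F_\mu(\phi) \geq h(\mu)$ for every $\phi \in C(X)$. So the infimum of $F_\mu$ over $C(X)$ is always at least $h(\mu)$, and the only question is whether the value $h(\mu)$ is actually attained.

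\emph{Upper semicontinuity implies the formula.} If $h$ is upper semicontinuous at $\mu$, Theorem~\ref{dynamical theorem} supplies $\phi$ with $\mu \in \operatorname{Eq}(\phi)$. Then $F_\mu(\phi) = h(\mu)$. Combined with the lower bound, the infimum is attained and equals $h(\mu)$, so it is a minimum.

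\emph{The formula implies upper semicontinuity.} Suppose the minimum is attained and equals $h(\mu)$, say at $\phi_0$. Then $P(\phi_0) = h(\mu) + \int \phi_0\, d\mu$, so $\mu$ is an equilibrium state for $\phi_0$. Corollary~\ref{full equivalence}, (2)$\Rightarrow$(1), then gives upper semicontinuity. For completeness, this step can be checked directly: for every $\nu$ we have $h(\nu) \leq h(\mu) + \int \phi_0\, d\mu - \int \phi_0\, d\nu$. The right-hand side is weak-$*$ continuous in $\nu$ and equals $h(\mu)$ at $\nu = \mu$, so $\limsup_{\nu\to\mu} h(\nu) \leq h(\mu)$.

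The only subtlety is the reading of ``$\min$''. We interpret the displayed formula as the assertion that the infimum is attained and equals $h(\mu)$. Without attainment, the equality $h(\mu) = \inf_\phi F_\mu(\phi)$ describes the concave upper semicontinuous envelope of $h$, not $h$ itself, and the equivalence could fail. With attainment built into the statement, the corollary follows immediately from the two observations above.
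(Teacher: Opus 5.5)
Your proof is correct and is exactly the paper's argument: the variational principle gives $P(\phi)-\int\phi\,d\mu\ge h(\mu)$, with equality precisely when $\mu\in\operatorname{Eq}(\phi)$, and Corollary~\ref{full equivalence} (Theorem~\ref{dynamical theorem} plus the trivial converse) finishes the equivalence. Your closing caveat is wrong, however: the equivalence does not depend on reading $\min$ as attainment, because if only $\inf_\phi\bigl(P(\phi)-\int\phi\,d\mu\bigr)=h(\mu)$ holds, then your own direct argument, applied to $\phi_\varepsilon$ with $P(\phi_\varepsilon)-\int\phi_\varepsilon\,d\mu<h(\mu)+\varepsilon$, gives $\limsup_{\nu\to\mu}h(\nu)\le h(\mu)+\varepsilon$, so $h$ is upper semicontinuous at $\mu$, and Theorem~\ref{dynamical theorem} then shows the infimum is attained (equivalently, for affine $h$ the upper semicontinuous concave envelope is just the upper semicontinuous regularization of $h$, and it agrees with $h$ at $\mu$ exactly when $h$ is upper semicontinuous there, which is the pointwise Fenchel--Moreau reading given in the introduction).
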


Equivalently: for ergodic $\mu$, $h$ is upper semicontinuous at $\mu$ if and only if there exists $a \in A(K)$ with $a \geq h$ and $a(\mu) = h(\mu)$.

\subsection{Ergodic Collection Realizability}

Next we can generalize Theorem~\ref{dynamical theorem} to closed families of ergodic measures. However we begin by observing that continuity of entropy on the prescribed ergodic boundary is a necessary condition for face realization.

\begin{prop}\label{necessary condition} If $F \subset M_T(X)$ is the set of equilibrium states for some $\phi \in C(X)$, then $h|_F$ is continuous affine. In particular, if $F = \overline{\operatorname{co}}(\mathcal{E})$ for a weak-$*$ closed ergodic set $\mathcal{E} \subset M_T^{erg}(X)$, then $h|_{\mathcal{E}}$ must be weak-$*$ continuous.
\end{prop}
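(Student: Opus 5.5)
On $F=\operatorname{Eq}(\phi)$ the identity $h(\nu) = P(\phi) - \int \phi\, d\nu$ holds for every $\nu \in F$, by the definition of equilibrium states. We will read both conclusions directly off this identity.

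\textbf{Step 1: continuity and affineness of $h|_F$.} The map $\nu \mapsto P(\phi) - \int \phi \, d\nu$ is the restriction to $F$ of a continuous affine function on $M_T(X)$: the constant $P(\phi)$ plus the weak-$*$ continuous, affine map $\nu \mapsto -\int \phi\,d\nu$. Since $h$ agrees with this function on $F$, $h|_F$ is continuous and affine in the weak-$*$ topology. Convexity of $F$ is automatic here, because both $h$ and $\nu\mapsto\int\phi\,d\nu$ are affine. If one also wants $F$ to be a face, this follows from the inequality $h(\nu)+\int\phi\,d\nu\le P(\phi)$ holding on all of $M_T(X)$ (by the variational principle). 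Neither fact is needed for the stated conclusion, however.

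\textbf{Step 2: the ``in particular'' clause.} Suppose $F = \overline{\operatorname{co}}(\mathcal{E})$ with $\mathcal{E}\subset M_T^{erg}(X)$ weak-$*$ closed. Then $\mathcal{E}\subset F$. Restricting the continuous function $h|_F$ to the subset $\mathcal{E}$ shows that $h|_{\mathcal{E}}$ is weak-$*$ continuous.

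\textbf{Main obstacle.} None of the steps is hard. The only point requiring care is that continuity is meant in the weak-$*$ subspace topology. This holds because $\int\phi\,d\nu$ is weak-$*$ continuous for $\phi\in C(X)$, so no appeal to Proposition~\ref{facial weak star} is needed.
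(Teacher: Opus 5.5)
Your proposal is correct and matches the paper's proof: the identity $h(\nu) = P(\phi) - \int \phi\, d\nu$ on $\operatorname{Eq}(\phi)$ shows that $h|_F$ is the restriction of a continuous affine function. The \emph{in particular} clause then follows by restricting to $\mathcal{E} \subset F$, a step the paper leaves implicit.
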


\begin{proof}
If $\nu \in F = \operatorname{Eq}(\phi)$, then $h(\nu) + \int \phi \, d\nu = P(\phi)$, so $h(\nu) = P(\phi) - \int \phi \, d\nu$. The right-hand side is continuous and affine in $\nu$.
\end{proof}

In \cite{jenkinson}, Jenkinson states (Theorems~5 and~6) that if the entropy map is globally upper semicontinuous and $\mathcal{E} \subset M_T^{erg}(X)$ is nonempty and weak-$*$ closed, then there exists $\phi \in C(X)$ with $\operatorname{Eq}(\phi) = \overline{\operatorname{co}}(\mathcal{E})$. This formulation omits a necessary continuity hypothesis, as the following example shows.

\begin{ex}\label{face counterexample} Let $(X, T)$ be the full $\Z$-shift on two symbols. The entropy map is globally upper semicontinuous. Let $(\delta_n)_{n \geq 1}$ be a sequence of distinct periodic orbit measures converging weak-$*$ to the Bernoulli$(1/2, 1/2)$ measure $\mu$. Set $\mathcal{E} = \{\delta_n : n \geq 1\} \cup \{\mu\}$. Then $\mathcal{E}$ is weak-$*$ closed, but $h(\delta_n) = 0$ for all $n$ while $h(\mu) = \log 2$, so $h|_{\mathcal{E}}$ is not continuous. By Proposition~\ref{necessary condition}, $\overline{\operatorname{co}}(\mathcal{E})$ cannot be the equilibrium set of any continuous potential.
\end{ex}

This continuity issue has been observed independently. Iommi and Velozo \cite{iommi_velozo_noncompact}, studying face realization for countable Markov shifts, explicitly note that an additional continuity assumption appears to be required in Jenkinson's Theorems~5 and~6. Wolf \cite{wolf2020}, in a related setting, observes that a continuity claim traced to Jenkinson has an incomplete argument in higher dimensions and identifies the underlying convex-geometric obstruction.

The correct theorem is the following, which replaces global upper semicontinuity with the sharp local conditions: continuity of $h|_{\mathcal{E}}$ and pointwise upper semicontinuity at each $\mu \in \mathcal{E}$.

\setcounter{fact1}{2}
\begin{fact1}\label{local face theorem} Let $G$ be a locally compact amenable group acting on $X$ by $T$ with $h_{\mathrm{top}}(X, T) < \infty$. Let $\mathcal{E} \subset M_T^{erg}(X)$ be nonempty and weak-$*$ closed, and set $F = \overline{\operatorname{co}}(\mathcal{E})$. Assume:
\begin{enumerate}
\item $h|_{\mathcal{E}}$ is weak-$*$ continuous;
\item $h$ is upper semicontinuous at every $\mu \in \mathcal{E}$.
\end{enumerate}
Then there exists $\phi \in C(X)$ such that the set of equilibrium states for $\phi$ is exactly $F$.
\end{fact1}

\begin{proof}
Set $K = M_T(X)$ and $E = M_T^{erg}(X)$. By Proposition~\ref{facial weak star}, the facial topology on $E$ coincides with the weak-$*$ subspace topology.

Since $\mathcal{E}$ is a compact subset of $K$ and $h|_{\mathcal{E}}$ is continuous, Tietze's theorem provides a continuous extension $b: K \to \R$ with $b|_{\mathcal{E}} = h|_{\mathcal{E}}$. Let $q: E \to [0, \infty)$ be the distance to $\mathcal{E}$ in a compatible metric on $E$, so $q$ is continuous with $q^{-1}(0) = \mathcal{E}$.

Choose $\Delta \geq 2\sup_E(h - b)$ with $\Delta > 0$ (finite since $h$ is bounded and $b$ is continuous on $K$). Set $V_0 = E$; then $h \leq b + 2^{-1}\Delta$ on $V_0$ by choice of $\Delta$. For each $n \geq 1$, since $h - b$ vanishes on $\mathcal{E}$ and is upper semicontinuous at each point of $\mathcal{E}$, compactness of $\mathcal{E}$ provides an open neighborhood $V_n \supset \mathcal{E}$ such that $h(\eta) \leq b(\eta) + 2^{-(n+1)}\Delta$ for all $\eta \in V_n$. Choose $t_0 > \sup_E q$ and $t_n > 0$ for $n \geq 1$ with $t_n < t_{n-1}$, $t_n \searrow 0$, and $\{q < t_n\} \subseteq V_n$. Define
$$r_n(\eta) = \min\!\left(1, \frac{q(\eta)}{t_n}\right), \qquad g(\eta) = b(\eta) + \Delta \sum_{n=1}^{\infty} 2^{-n} r_n(\eta).$$
Then $g \in C(E)$ and $g|_{\mathcal{E}} = h|_{\mathcal{E}}$. For each $\eta \in E \setminus \mathcal{E}$, since $0 < q(\eta) < t_0$ and $t_n \searrow 0$, there is a unique $n \geq 1$ with $t_n \leq q(\eta) < t_{n-1}$, giving $\eta \in V_{n-1}$ and hence $h(\eta) - b(\eta) \leq 2^{-n}\Delta$, while $g(\eta) - b(\eta) \geq 2^{-(n-1)}\Delta$. Therefore $g > h$ on $E \setminus \mathcal{E}$.

By Lau, $g$ extends to $a \in A(K)$ with $a|_E = g$. For any $\nu \in K$, strong affineness gives $a(\nu) = \int_E g \, d\xi_\nu \geq \int_E h \, d\xi_\nu = h(\nu)$. Since $K$ is a simplex, uniqueness of the representing measure implies that $F = \overline{\operatorname{co}}(\mathcal{E})$ is a closed face of $K$ and that $\nu \in F$ if and only if $\xi_\nu$ is supported on $\mathcal{E}$ (see \cite[Section~11]{phelps2001}). If $\nu \in F$, then $g = h$ $\xi_\nu$-a.e., so $a(\nu) = h(\nu)$. If $\nu \notin F$, then $\xi_\nu(E \setminus \mathcal{E}) > 0$, and $g > h$ on $E \setminus \mathcal{E}$, so $a(\nu) > h(\nu)$.

Write $a(\nu) = \int f \, d\nu$ for some $f \in C(X)$, and let $\phi = -f$. Then $h(\nu) + \int \phi \, d\nu = h(\nu) - a(\nu) \leq 0$ with equality if and only if $\nu \in F$. In particular, $P(\phi) = 0$ and $\operatorname{Eq}(\phi) = F$.
\end{proof}

The converse, that continuity of $h|_{\mathcal{E}}$ and pointwise upper semicontinuity at each $\mu \in \mathcal{E}$ are necessary for $F = \overline{\operatorname{co}}(\mathcal{E})$ to be an equilibrium face, follows from Proposition~\ref{necessary condition} (for continuity) and Corollary~\ref{full equivalence} applied to each $\mu \in \mathcal{E}$ (for upper semicontinuity).

\section{Extension to Non-Compact Systems}

\subsection{General Non-Compact Setting}

We now extend our results beyond the compact setting. The key observation is that the proof of Theorem~\ref{dynamical theorem} can be applied to any compact metrizable system containing $(X, T)$ as an invariant subsystem, provided the subsystem entropy is bounded, and then restricting the discovered potential to $X$. Throughout this section, we identify $M_T(X)$ with the subset $\{\nu \in M_{\bar{T}}(\bar{X}) : \nu(X) = 1\}$ via the embedding that extends measures by zero on $\bar{X} \setminus X$, and equip $M_T(X)$ with the induced weak-$*$ subspace topology. Upper semicontinuity of $h$ at $\mu$ is understood with respect to this topology.

\setcounter{fact1}{3}
\begin{fact1}\label{non-compact general} Let $(\bar{X}, \bar{T})$ be a compact metrizable system, let $X \subset \bar{X}$ be $\bar{T}$-invariant, and suppose the entropy map of the subsystem $h: M_T(X) \to [0, \infty)$ is bounded. If $\mu \in M_T^{erg}(X)$ and $h$ is upper semicontinuous at $\mu$, then there exists $\phi \in C(\bar{X})$ such that $\mu$ is the unique equilibrium state for $\phi|_X$.
\end{fact1}

\begin{proof} We start by defining a modified entropy map $h'$ on $M_{\bar{T}}(\bar{X})$, with measures in $M_T(X)$ being identified with their ambient extension. In particular, let $h'$ be the affine extension defined by: for any ergodic $\xi \in M_{\bar{T}}(\bar{X})$, if $\xi(X) = 1$ let $h'(\xi) = h(\xi)$, otherwise let $h'(\xi) = 0$. By construction $h'$ is bounded and strongly affine on $M_{\bar{T}}(\bar{X})$. 

We now show that $h'$ is upper semicontinuous at $\bar{\mu}$. Since $X$ is $\bar{T}$-invariant and $\mu$ is ergodic for $T$, $\mu$ is also ergodic for $\bar{T}$, hence an extreme point of $M_{\bar{T}}(\bar{X})$. Let $\bar{\mu}_n \to \bar{\mu}$ in $M_{\bar{T}}(\bar{X})$. For each $n$, the ergodic decomposition of $\bar{\mu}_n$ splits into components supported on $X$ (contributing to $h'$) and components not supported on $X$ (on which $h' = 0$). Let $w_n \in [0,1]$ be the total weight of the former; when $w_n > 0$, let $\lambda_n \in M_T(X)$ be their barycenter, so that $h'(\bar{\mu}_n) = w_n h(\lambda_n)$, and write $\bar{\mu}_n = w_n \lambda_n + (1-w_n)\rho_n$.

Passing to a subsequence, assume $w_n \to w$. If $w = 0$ then for $H = \sup \{ h(\nu) : \nu \in M_T(X) \}$, 
$$h'(\bar{\mu}_n) \leq w_n H \to 0 \leq h(\mu).$$ 
If $w > 0$, pass to a further subsequence so that $\lambda_n \to \lambda$ and $\rho_n \to \rho$ in $M_{\bar{T}}(\bar{X})$. Then $\bar{\mu} = w\lambda + (1-w)\rho$. Since $\mu$ is an extreme point of $M_{\bar{T}}(\bar{X})$, it must be the case that $\lambda = \bar{\mu}$. Because the weak-$*$ topology on $M_T(X)$ agrees with the subspace topology inherited from $M_{\bar{T}}(\bar{X})$, this gives $\lambda_n \to \mu$ in $M_T(X)$. Since $0 \leq w_n \leq 1$, we have $h'(\bar{\mu}_n) = w_n h(\lambda_n) \leq h(\lambda_n)$, so upper semicontinuity of $h$ at $\mu$ gives
$$\limsup h'(\bar{\mu}_n) \leq \limsup h(\lambda_n) \leq h(\mu) = h'(\bar{\mu}),$$
concluding that $h'$ is upper semicontinuous at $\bar{\mu}$.

Finally, since $h'$ is bounded, strongly affine, and upper semicontinuous at $\bar{\mu}$, the proof of Theorem~\ref{dynamical theorem} applies to $h'$ on $M_{\bar{T}}(\bar{X})$, yielding $\phi \in C(\bar{X})$ such that $\bar{\mu}$ is the unique maximizer of $h'(\cdot) + \int \phi \, d(\cdot)$. Since $h'(\bar{\nu}) = h(\nu)$ for all $\nu \in M_T(X)$, we can conclude that $\mu$ is the unique equilibrium state for $\phi|_X$.
\end{proof}

\subsection{One-point compactification and $C_0$ potentials}

When $X$ is locally compact and $\sigma$-compact, the one-point compactification provides a canonical ambient compact system and upgrades the potential to $C_0(X)$. Let $G$ be a locally compact amenable group acting continuously on a metrizable, locally compact, $\sigma$-compact space $X$ by homeomorphisms $T = \{T_g\}_{g \in G}$. Let $X' = X \cup \{\infty\}$ denote the one-point compactification. Since $X$ is metrizable, locally compact, and $\sigma$-compact, $X'$ is a compact metric space.

\begin{lemma}\label{action extends} The action extends uniquely to a continuous action $T' = \{T'_g\}_{g \in G}$ on $X'$ with $T'_g(\infty) = \infty$ for all $g \in G$. Each $T'_g$ is a homeomorphism.
\end{lemma}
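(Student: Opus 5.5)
We define $T'_g$ to agree with $T_g$ on $X$ and to fix $\infty$. This is the only possible extension: $T'_g$ must be a bijection of $X'$ that maps $X$ onto $X$ (because $T_g$ is a bijection of $X$), so it is forced to send $\infty$ to $\infty$. The group law and identity axioms are inherited directly from $T$, and $T'_{g^{-1}}$ is the inverse of $T'_g$. Once we show each $T'_g$ is continuous, it follows that each is a homeomorphism. The real work is to prove that the full action map $G \times X' \to X'$, $(g,x) \mapsto T'_g x$, is jointly continuous. Our plan is to check continuity at points of $G \times X$ and at points of $G \times \{\infty\}$ separately.

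\emph{Points $(g_0,x_0)$ with $x_0 \in X$.} The set $G \times X$ is open in $G \times X'$, and on it the action coincides with the original continuous action $G \times X \to X$. Since $X$ is open in $X'$, this restriction is continuous into $X'$, which gives continuity at these points.

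\emph{Points $(g_0,\infty)$.} A basic neighbourhood of $\infty$ has the form $X' \setminus C$ with $C \subset X$ compact. We must find a neighbourhood $U$ of $g_0$ and a compact $D \subset X$ such that $T_g x \notin C$ whenever $g \in U$ and $x \in X \setminus D$. We will use local compactness of $G$ and choose $U$ to be a compact neighbourhood of $g_0$. Then we set
$$D = \{T_{g^{-1}} y : g \in U,\ y \in C\},$$
which is the image of the compact set $U \times C$ under the continuous map $(g,y) \mapsto T_{g^{-1}} y$. Hence $D$ is compact. If $g \in U$ and $T_g x \in C$, then $x = T_{g^{-1}}(T_g x) \in D$. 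Taking the contrapositive, every $x \in X' \setminus D$ (including $x = \infty$) with $g \in U$ satisfies $T'_g x \in X' \setminus C$. This proves continuity at $(g_0,\infty)$.

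The main obstacle is exactly this step at $\infty$, where local compactness of $G$ is essential. Once it is done, we note that $X'$ is compact metrizable: $X$ is locally compact, Hausdorff and second countable, since it is metrizable and $\sigma$-compact and therefore separable. Consequently $(X',T')$ is a compact metrizable system to which Theorem~\ref{non-compact general} applies.
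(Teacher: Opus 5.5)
Your proof is correct and follows the paper's argument. The key step is the same as in the paper: choose a compact neighbourhood $U$ of $g_0$ and note that $\{T_{g^{-1}}y : g\in U,\ y\in C\}$ is compact, so $T'_g x\notin C$ whenever $g\in U$ and $x$ lies outside this set. The differences are minor: you work with neighbourhoods rather than sequences, and you obtain continuity of each $T'_g$ as a slice of joint continuity. Working with neighbourhoods has the small advantage of not implicitly requiring $G$ to be first countable.
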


\begin{proof} For each $g \in G$, define $T'_g(\infty) = \infty$ and $T'_g|_X = T_g$. Fix $g \in G$. Let $x_n \to \infty$ in $X'$ and let $C \subset X$ be compact. Since $T_g$ is a homeomorphism, $T_g^{-1}C$ is compact, so $x_n \notin T_g^{-1}C$ for all large $n$. Hence $T_g(x_n) \notin C$ eventually, giving $T'_g(x_n) \to \infty$. The same argument applied to $T_{g^{-1}}$ shows $(T'_g)^{-1} = T'_{g^{-1}}$ is continuous at $\infty$, so $T'_g$ is a homeomorphism. The group law $T'_g \circ T'_h = T'_{gh}$ holds on $X'$ by continuity (it holds on the dense subset $X$ and both sides fix $\infty$).

It remains to verify joint continuity of $(g, x) \mapsto T'_g(x)$ at points $(g_0, \infty)$, since the action is jointly continuous on $G \times X$ by hypothesis. Suppose $g_n \to g_0$ and $x_n \to \infty$. If $T'_{g_n}(x_n)$ had a subsequence in some compact $C \subset X$, then for a compact neighborhood $U$ of $g_0$ with $g_n \in U$ eventually, the joint continuity of $(g, y) \mapsto T_{g^{-1}}(y)$ on $G \times X$ would give that $\{T_{g^{-1}}(y) : g \in U, y \in C\}$ is the continuous image of the compact set $U \times C$, hence compact. But $x_n = T_{g_n}^{-1}(T_{g_n}(x_n))$ would then lie in this compact set, contradicting $x_n \to \infty$.
\end{proof}

By Lemma~\ref{action extends}, $(X', T')$ is a compact metrizable system containing $X$ as a $T'$-invariant subset. Applying Theorem~\ref{non-compact general} and replacing the resulting potential $\phi$ by $\phi - \phi(\infty)$ yields a potential whose restriction to $X$ lies in $C_0(X)$:

\setcounter{fact1}{4}
\begin{fact1}\label{non-compact theorem} Let $G$ be a locally compact amenable group acting continuously by homeomorphisms on a metrizable, locally compact, $\sigma$-compact space $X$, with $\sup_{\mu \in M_T(X)} h(\mu) < \infty$. Let $\mu \in M_T^{erg}(X)$. Then $h$ is upper semicontinuous at $\mu$ if and only if there exists $f \in C_0(X)$ such that $\mu$ is the unique equilibrium state for $f$.
\end{fact1}

\begin{proof} The reverse direction holds as in the compact case. For the forward direction: by Lemma~\ref{action extends}, $(X', T')$ is a compact metrizable system with $X$ as a $T'$-invariant subset. Theorem~\ref{non-compact general} yields $\phi \in C(X')$ such that $\mu$ is the unique equilibrium state for $\phi|_X$. Replacing $\phi$ by $\phi - \phi(\infty)$ preserves equilibrium states, and $f = (\phi - \phi(\infty))|_X \in C_0(X)$.
\end{proof}

\subsection{Application to Countable State Markov Shifts}

We apply Theorem~\ref{non-compact theorem} to two-sided countable-state Markov shifts, where the $\Z$-action by the shift map $\sigma$ acts by homeomorphisms. Recall that a two-sided countable-state Markov shift $X \subset \N^\Z$ is locally compact if and only if its transition matrix is both row-finite and column-finite (each symbol has finitely many successors and finitely many predecessors). Under this condition, each cylinder $[a]_0 = \{x \in X : x_0 = a\}$ is compact: the set of allowable symbols at each coordinate $k$, given $x_0 = a$, is finite (by induction on $|k|$ using the row/column-finiteness), so $[a]_0$ is a closed subset of a product of finite discrete sets, hence compact by Tychonoff. Since $X = \bigcup_{a \in \N} [a]_0$ is a countable union of compact sets, $X$ is $\sigma$-compact.

The thermodynamic formalism for countable-state Markov shifts was developed by Fiebig, Fiebig, and Yuri \cite{fiebig_fiebig_yuri2002}. Iommi, Todd, and Velozo \cite{iommi_todd_velozo} proved that for any countable-state Markov shift with finite entropy, $h$ is upper semicontinuous at every ergodic measure. Combining with Theorem~\ref{non-compact theorem} we arrive at:

\setcounter{fact1}{5}
\begin{cor1}\label{countable markov cor} Let $X$ be a locally compact two-sided countable-state Markov shift with $\sup_\mu h(\mu) < \infty$. For every ergodic $\mu \in M_\sigma(X)$, there exists $f \in C_0(X)$ such that $\mu$ is the unique equilibrium state for $f$.
\end{cor1}

We note here that in independent work, Iommi and Velozo \cite{iommi_velozo_noncompact} prove that for a transitive countable Markov shift, every ergodic measure with finite one-cylinder entropy $H_\mu(\mathcal{B})$ is the unique equilibrium measure of a uniformly continuous function; they also show that measures which are not equilibrium measures for any bounded-above continuous function are dense. Their argument adapts the global upper semicontinuity framework of Phelps \cite[Chapter~12]{phelps2001}. This does not subsume Corollary~\ref{countable markov cor}, which requires only local compactness and produces a $C_0$ potential, but the two results address closely related questions in the countable-state setting.

Corollary~\ref{countable markov cor} requires local compactness (equivalently, that the transition matrix is both row-finite and column-finite). Theorem~\ref{non-compact general} removes this restriction: any countable-state Markov shift $X \subset \N^\Z$ embeds as a shift-invariant subset of $(\N \cup \{\infty\})^\Z$, which is compact metrizable under the product topology. Therefore, if $\sup_\mu h(\mu) < \infty$ and $h$ is upper semicontinuous at an ergodic $\mu \in M_\sigma(X)$, then $\mu$ is the unique equilibrium state for the restriction of some $\phi \in C((\N \cup \{\infty\})^\Z)$ to $X$. By \cite{iommi_todd_velozo}, the upper semicontinuity hypothesis is automatic when $h_{\mathrm{top}}(X) < \infty$, so the conclusion holds for every ergodic measure on any countable-state Markov shift with finite entropy, without any local compactness assumption.

\subsection*{Disclosures} The author has no conflicts of interest to declare that are relevant to the content of this article. No funding was received to assist with the preparation of this manuscript.

\subsection*{Data Availability} No data sets were generated or analysed during this study.

\bibliographystyle{amsplain}
\bibliography{mybib}

\end{document}